 \newcommand{\irnn}{\int_{\Omega_{n }(t)}}
\def\XXint#1#2#3{{\setbox0=\hbox{$#1{#2#3}{\int}$ }
		\vcenter{\hbox{$#2#3$ }}\kern-.6\wd0}}
 \numberwithin{equation}{section}
 \newcommand{\re}{r_\ve}
 \newcommand{\lz}{L_{N}}
\newcommand{\ra}{\rightarrow}
\newcommand{\ve}{\varepsilon}
\newcommand{\mq}{m_q}
\newcommand{\iqt}{\int_{Q_T}}
\newcommand{\fp}{f^\prime(r)}
\newcommand{\fr}{f(r)}
\newcommand{\pt}{\partial_t}
\newcommand{\RN}{\mathbb{R}^N}
\newcommand{\irn}{{\int_{\RN}}}
\newcommand{\vps}{\varphi_i}
\newcommand{\vpss}{\varphi_i}
\newcommand{\ar}{A_{ r}}
\newcommand{\vp}{\varphi}
\newcommand{\rn}{\mathbb{R}^N}
\newcommand{\irnt}{\int_{ Q_T}}
\newcommand{\pxi}{\partial_{x_i}}
\newcommand{\pxj}{\partial_{x_j}}
\newcommand{\vo}{v^{(0)}}
\newtheorem{theorem}{Theorem}[section]
\newtheorem{lemma}[theorem]{Lemma}
\newtheorem{clm}[theorem]{Claim}
\theoremstyle{definition}
\newtheorem{definition}[theorem]{Definition}
\title[the incompressible Navier-Stokes equations in high space dimensions
] 
      {Large data  existence of global-in-time strong solutions to the incompressible Navier-Stokes equations in high space dimensions}
\author[Xiangsheng Xu]{}
\subjclass{Primary: 76D03, 76D05, 35Q30, 35Q35.}
 \keywords{Incompressible Navier-Stokes equations, boundedness of solutions, Calder\'{o}n-Zygmund kernel, De Giorgi iteration scheme}
 \email{xxu@math.msstate.edu}
\begin{document}

\maketitle

\centerline{\scshape Xiangsheng Xu}
\medskip
{\footnotesize
 \centerline{Department of Mathematics \& Statistics}
   \centerline{Mississippi State University}
   \centerline{ Mississippi State, MS 39762, USA}
} 

	\begin{abstract}We study the existence of a strong solution to the initial value problem for the incompressible Navier-Stokes equations in the whole space. Our investigation shows that  a local in-time strong solution to the problem  never develops singularity  whenever the initial velocity is divergence free and uniformly bounded with finite energy. As a result, it can be extended for all time. Our results seem to have given a positive answer to  the Navier-Stokes millennium problem proposed by the Clay Mathematical Institute.

\end{abstract}
\bigskip



\section{Introduction}
In this paper we investigate the boundedness of solutions to the initial value problem for the 
incompressible Navier-Stokes equations in the whole space $\rn$, $N\geq 3$.
Precisely, the problem reads
\begin{eqnarray}
	\pt v+(v\cdot\nabla) v+\nabla p&=&\nu\Delta v\ \ \mbox{in $\rn\times(0,T)\equiv Q_T$},\label{ns1}\\
	\nabla\cdot v&=&0\ \ \mbox{in $ Q_T$},\label{ns2}\\
	v(x,0)&=& v^{(0)}(x)\ \ \mbox{on $\rn$,}\label{ns3}
\end{eqnarray}
with given $T, \nu\in (0,\infty)$ and the initial data $v^{(0)}(x)$. 
Physically, the problem describes the flow of a fluid occupying $\rn$. In this case,  $v=\left(v_1,\cdots, v_N\right)^T$ is the velocity of the fluid,  $p$ is the pressure of the fluid, and $\nu$ is the viscosity.

The incompressible Navier-Stokes equations occupy a significant position
in partial differential equations. They represent the most fundamental model for the flow of an incompressible viscous fluid. As a result, they have been studied extensively in the literature. See, for example, \cite{C1,C2,CF,OP,RRS,T}. In spite of that, the basic issue of the well-posedness of the initial value problem in three or higher space dimensions remains elusive. The objective of this paper is to settle this well known open problem \cite{F}.



Our main result is the following theorem
\begin{theorem}\label{thm}Assume 
	that
	\begin{equation}\label{ns31}
		|v^{(0)}|\equiv\sqrt{\left(v^{(0)}_1\right)^2+\cdots+\left(v^{(0)}_N\right)^2}\in L^2(\rn)\cap L^\infty(\rn) \ \mbox{and $\nabla\cdot v^{(0)}=0$}.
	\end{equation}
	Let $(v, p)$ be a local (in-time) strong solution to \eqref{ns1}-\eqref{ns3}. 
	Then 
	there exist a positive number $c=c\left(N, \nu, \|\vo\|_{2,\rn}, \|\vo\|_{\infty,\rn}\right)$ such that
	\begin{equation}\label{ns24}
		\|v\|_{\infty,Q_T}\leq c\left(N, \nu, \|\vo\|_{2,\rn}, \|\vo\|_{\infty,\rn}\right).
	\end{equation}
	Here 
	$$\|\vo\|_{\ell,\rn}=\left(\int_{\rn}|\vo|^\ell dx\right)^{\frac{1}{\ell}}.$$
\end{theorem}
%

A result of \cite{OP} asserts that a local (in-time) strong solution to \eqref{ns1}-\eqref{ns3} does exist under \eqref{ns31}. Since the upper bound in \eqref{ns24} does not depend on $T$, our theorem asserts that a local (in-time) strong solution never develops singularity. As a result,  it can be extended for all time. This yields a positive answer to the Navier-Stokes millennium problem. 

The so-called 
global in time existence of a strong solution to \eqref{ns1}-\eqref{ns3} has been one of the most outstanding problems in partial differential equations \cite{F}, and many papers have been dedicated to the subject. Existing results can be classified into two categories. In the first category, one establishes the global existence by imposing some type of smallness assumptions on the initial velocity $v^{(0)}$, or the largeness of the viscosity $\nu$, or their combinations. This began with the work of Leray \cite{L}, who showed the global result by assuming 
\begin{equation*}
	\vo\in H^1(\rn)\ \ \mbox{and}\ \ \nu>>\|\vo\|^{\frac{1}{2}}\|_{2,\rn}\|\nabla\vo\|^{\frac{1}{2}}_{2,\rn}.
\end{equation*} 
Fujita and Kato \cite{FK} weakened the condition to 
$$\vo\in H^\frac{1}{2}(\rn)\ \ \mbox{and}\ \ \nu>>\|v^{(0)}\|_{H^{\frac{1}{2}}(\rn)}.$$
This result was further extended by Kato \cite{K} for the initial data in $L^N$, and by Cannone, Meyer
and Planchon \cite{CMP} for the initial data in the Besov space $B_{p,\infty}^{-1+\frac{3}{p}}$ with $p\in(3,\infty) $.  The
most recent result in this direction was given by Koch and Tataru \cite{KT}. They proved
the global existence under the condition that the initial velocity was sufficiently small in $BMO^{-1}$. 
We refer the reader to \cite{LZ} for other related works. We mention in passing that  Bourgain and Pavlovi\'{c} \cite{BP} showed the ill-posedness of \eqref{ns1}-\eqref{ns3} in $BMO^{-1}_{\infty,\infty}(\mathbb{R}^3)$.
In the second category, one obtains the existence of a strong solution by improving the integrability of a weak one. The most celebrated result in this direction is
the work of Serrin, Prodi, and Ladyzenskaja, which states that if a
weak solution $v$ belongs to 
$L^r((a,b);L^s)$
with $\frac{2}{r}+\frac{N}{s} \leq 1, s > N$, then $v$ is a
strong solution on the time interval (a, b); the critical case where $r = \infty, s = N$ was
proved by Escauriaza et al. \cite{ESS}. Note that when $s=r$ the condition becomes $v\in L^{N+2} $. A by-product of our development is that 
\begin{equation}\label{intr}
	|v|\in L^\infty(Q_T) \ \ \mbox{whenever}\ \ v\in L^{s}(Q_T)\ \ \mbox{for some $s>N+2$.}
\end{equation}
Finally, Beale et al. \cite{BKM} showed
that if $\mbox{curl}\ v \in  L^1
((a, b);L^\infty)$ then $v$ is a strong solution on the time interval
$(a, b]$. However, it is not clear if it is any easier to obtain these conditions.

%


The simultaneous presence of both $\nabla p$ and the term $(v\cdot\nabla) v$  in \eqref{ns1} seems to be the source of  all the major mathematical difficulties. Indeed, if the latter is absent, the resulting equations are the so-called Stokes equations and the existence of a classical solution to the initial value problem for such equations  has been known for a while \cite{OP}. On the other hand, if the pressure $p$ is a given nice function,  the classical regularity theory for linear parabolic equations  asserts that for each $q>N+2$ there is a constant $c=c(N,\nu,q)$ such that
\begin{equation}\label{pi}
	\|v\|_{\infty, Q_T}\leq 2\sqrt{N}\|\vo\|_{\infty,\rn}+c(N,\nu,q)\|\vo\|_{2,\rn}+2\sqrt{N}\|p\|_{q, Q_T}.
\end{equation}
Indeed,  the classical De Giorgi iteration scheme is still applicable in spite of the nonlinear term $(v\cdot\nabla) v$. 
Let us recall this argument. 
 For each $i\in \{1,\cdots,N\}$, the $i$-th component of the vector equation \eqref{ns1} is the scalar equation
\begin{eqnarray}
	\pt v_i+(v\cdot\nabla) v_i-\nu\Delta v_i&=&-\partial_i p\ \ \mbox{in $ Q_T$}.\label{ni1}
	\end{eqnarray}
It is coupled with  the initial condition
\begin{equation}
	v_i(x,0)= v_i^{(0)}(x)\ \ \mbox{on $\rn$}.\nonumber
\end{equation}
Select
\begin{equation}
	k\geq \|v_i^{(0)}\|_{\infty,\rn}\nonumber
\end{equation}
as below.
Define
\begin{equation}
	k_n=2k-\frac{k}{2^{n}}\ \ \mbox{for $n=0,1,2\cdots$.}\nonumber
\end{equation}
Subsequently, use $\left(v_i-k_{n+1}\right)^+$ as a test function in \eqref{ni1} to obtain
\begin{eqnarray}
\lefteqn{\frac{1}{2}\frac{d}{dt}\irn \left[\left(v_i-k_{n+1}\right)^+\right]^2dx+	\irn(v\cdot\nabla) v_i(v_i-k_{n+1})^+dx+\nu\irn\left|\nabla\left(v_i-k_{n+1}\right)^+\right|^2dx}\nonumber\\
&	=&\irn p\partial_i\left(v_i-k_{n+1}\right)^+dx\leq \frac{\nu}{2}\irn\left|\nabla\left(v_i-k_{n+1}\right)^+\right|^2dx+\frac{\nu}{2}\int_{\{v_i\geq k_{n+1}\}}p^2dx.\label{vi1}
\end{eqnarray}
The key observation is that \eqref{ns2} implies
\begin{equation}\label{vv}
	\irn(v\cdot\nabla) v_i(v_i-k_{n+1})^+dx =0.
\end{equation}
Use this in \eqref{vi1} and then integrate to derive
\begin{equation}
	\sup_{0\leq t\leq T}\irn\left[\left(v_i-k_{n+1}\right)^+\right]^2dx+\iqt\left|\nabla\left(v_i-k_{n+1}\right)^+\right|^2dxdt\leq c(\nu)\int_{\{v_i\geq k_{n+1}\}}p^2dxdt.\label{vi2}
\end{equation}
Set
\begin{equation}
	y_n=\iqt\left[\left(v_i-k_{n}\right)^+\right]^{2\lz}dxdt,\nonumber
\end{equation}
where
\begin{equation}\label{lzd}
	\lz=\frac{N+2}{N}.
\end{equation}
Subsequently,
\begin{equation}
	y_n\geq \left(k_{n+1}-k_n\right)^{2\lz}|\{v_i\geq k_{n+1}\}|=\frac{k^{2\lz}}{2^{2\lz(n+1)}}|\{v_i\geq k_{n+1}\}|.\nonumber
\end{equation}
This combined with \eqref{vi2} and \eqref{ns4} below yields
\begin{eqnarray}
	y_{n+1}&\leq &c(N,\nu)\left(\|p\|_{q, Q_T}^{2}|\{v_i\geq k_{n+1}\}|^{1-\frac{2}{q}}\right)^{\lz}\nonumber\\
	&\leq&\frac{c(N,\nu,q)\|p\|_{q, Q_T}^{2\lz}2^{2\lz(1+\alpha)n}}{k^{2\lz(1+\alpha)}}y_n^{1+\alpha},\label{vi3}
\end{eqnarray}
where
\begin{equation}
	\alpha=\left(1-\frac{2}{q}\right)\left(\frac{2}{N}+1\right)-1=\frac{2(q-N-2)}{Nq}>0.\nonumber
\end{equation}
We further require
\begin{equation}
	k\geq \|p\|_{q, Q_T}.\nonumber
\end{equation}
As a result, \eqref{vi3} is reduced to
\begin{equation}
		y_{n+1}\leq\frac{c(N,\nu,q)2^{2\lz(1+\alpha)n}}{k^{2\lz\alpha}}y_n^{1+\alpha}.\nonumber
\end{equation}
Now we are in a position to apply Lemma \ref{ynb} below. Upon doing so, we obtain
\begin{equation}
	\lim_{n\ra \infty}y_n=\iqt\left[\left(v_i-2k\right)^+\right]^{2\lz}dxdt=0,\nonumber
\end{equation}
provided that
\begin{equation}
	y_0=\iqt\left[\left(v_i-k\right)^+\right]^{2\lz}dxdt\leq\iqt\left(v_i^+\right)^{2\lz}dxdt\leq \frac{k^{2\lz}}{\left[c(N,\nu,q)\right]^\frac{1}{\alpha}2^{\frac{2\lz(1+\alpha)}{\alpha^2}}}.\nonumber
\end{equation}
Therefore, if we take
\begin{equation}
	k= \|v_i^{(0)}\|_{\infty,\rn}+\|p\|_{q, Q_T}+\left[c(N,\nu,q)\right]^\frac{1}{2\lz\alpha}2^{\frac{1+\alpha}{\alpha^2}}\|v_i^+\|_{2\lz, Q_T},\nonumber
\end{equation}
then we have
\begin{equation}
	\|v_i^+\|_{\infty, Q_T}\leq 2k.\nonumber
\end{equation}
As usual, we will denote the constant coefficient in the expression for $k$ by $c(N,\nu,q)$.
By applying the same argument to $-v_i$, we arrive at
\begin{equation}
	\|v_i^-\|_{\infty, Q_T}\leq  2\|v_i^{(0)}\|_{\infty,\rn}+2\|p\|_{q, Q_T}+c(N,\nu,q)\|v_i^-\|_{2\lz, Q_T}.\nonumber
\end{equation}
This together with \eqref{ns5} implies \eqref{pi}.
Our proof clearly indicates
\begin{equation}\label{li1}
	\lim_{q\ra(N+2)^+}c(N,\nu,q)=\infty.
\end{equation}

The classical way of eliminating $p$ from \eqref{ns1} is to take the curl of \eqref{ns1}. That is, one consider the equation satisfied by the vorticity $w=\nabla\times v$. When the space dimension is $2$, this method is very successful and leads to the existence of classical solutions. However, if $N\geq 3$ the method largely fails due to the so-called vortex stretching. 

The issue is that the pressure $p$ roughly behaves like $|v|^2$. 
To see this, we note from \eqref{ns2}
that
\begin{eqnarray}
	(v\cdot\nabla) v=(v_jv_i)_{x_j}.\nonumber
\end{eqnarray}
Here we have employed the notation convention of summing over repeated indices.	
Take the divergence of \eqref{ns1} to get
\begin{equation}\label{pdi}
	-\Delta p=(v_iv_j)_{x_ix_j}.
\end{equation}
As a result,
 \eqref{ns21} below holds,  and \eqref{pi} becomes
  \begin{equation}\label{pi1}
 	\|v\|_{\infty, Q_T}\leq 2\sqrt{N}\|\vo\|_{\infty,\rn}+c(N,\nu,q)\|\vo\|_{2,\rn}+c(N,q)\|v\|^2_{2q, Q_T}.
 \end{equation} 
 The constant $c(N,q)$ is determined by Lemma \ref{cz} below. Therefore,
 \begin{equation}\label{li2}
 	\lim_{q\ra \infty}c(N,q)=\infty.
 \end{equation}
Unfortunately, for $|v|^s$ to be integrable in a weak solution to \eqref{ns1}-\eqref{ns3}  $s$ is rather small. As indicated in \eqref{ns5} below, we only have
\begin{eqnarray}
	|v|\in L^{2\lz}(Q_T).\label{2lz}
\end{eqnarray}
It is also far from meeting the condition in the result of Serrin, Prodi, and Ladyzenskaja we mentioned earlier. 
On the other hand, if the power of $\|v\|_{2q, Q_T}$ in \eqref{pi1} had been $1$, \eqref{ns24} would follow from a suitable application of the interpolation inequality
\begin{eqnarray}\label{int}
	\|v\|_{2q, Q_T}\leq \|v\|_{\infty, Q_T}^{\frac{q-\lz}{q}}\|v\|_{2\lz, Q_T}^{\frac{\lz}{q}}.
\end{eqnarray}
This would be true for any $q\in(N+2, \infty)$. We are motivated to pursue the idea of trying to reduce the power of $\|v\|_{2q, Q_T}$ in \eqref{pi1}.  To this end, we  appeal to a  De Giorgi type of arguments. To elaborate further, 
let $w$ be the solution to the initial value problem
\begin{eqnarray}
	\pt w+(v\cdot\nabla)w-\nu\Delta w&=&0\ \ \mbox{in $Q_T$},\label{w1}\\
	w(x,0)&=&\vo(x)\ \ \mbox{on $\rn$}.\label{w2}
\end{eqnarray}
Set
\begin{equation}\label{uvw}
	u= v-w
\end{equation}
 Then $u$ satisfies 
\begin{eqnarray}
	\pt u+(v\cdot\nabla)u-\nu\Delta u&=&-\nabla p\ \ \mbox{in $Q_T$},\label{u1}\\
	u(x,0)&=&0\ \ \mbox{on $\rn$}.\label{u2}
\end{eqnarray}
The new idea here is \eqref{kcon6} below and its subsequent applications. Roughly speaking, by dividing through \eqref{u1} by $\|u\|_{r,Q_T}$ for a suitable choice of $r$ and then making use of \eqref{kcon6} in a proper manner, we are able to establish \eqref{hh1}. That is to say, we have decomposed the $L^{2q}$- norm squared in the last term of \eqref{pi1} into a product of three different norms, each of which carries an  exponent. The redeeming feature is that the total sum of their exponents can be made less than or equal to $1$ as indicated in \eqref{tot} below.
However,  we must be able to combine these three norms into a single one without increasing the total sum  too much. To illustrate, suppose that we wish to transform
\begin{equation}\label{v1}
\|u\|_{\ell,Q_T}^{\beta}\|u\|_{r,Q_T}^{\alpha}	
\end{equation}
into a single norm. If $\alpha>0$ and $\ell>r>2\lz$, we can eliminate the $L^r$-norm by invoking the interpolation inequality
\begin{equation}
	\|u\|_{r,Q_T}\leq \|u\|_{\ell,Q_T}^{\frac{\ell(r-2\lz)}{r(\ell-2\lz)}}\|u\|_{2\lz,Q_T}^{\frac{2\lz(\ell-r)}{r(\ell-2\lz)}}.\label{int1}
\end{equation}
This leads to
\begin{equation}
	\|u\|_{\ell,Q_T}^{\beta}\|u\|_{r,Q_T}^{\alpha}\leq \|u\|_{\ell,Q_T}^{\beta+\frac{\ell(r-2\lz)\alpha}{r(\ell-2\lz)}}\|u\|_{2\lz,Q_T}^{\frac{2\lz(\ell-r)\alpha}{r(\ell-2\lz)}}.\nonumber
\end{equation}
In view of \eqref{2lz}, we may treat the $L^{2\lz}$-norm as a given constant. 
The new exponent of the $L^\ell$-norm satisfies
\begin{equation}
\beta+\frac{\ell(r-2\lz)\alpha}{r(\ell-2\lz)}<\alpha+\beta.\nonumber
\end{equation}
We also see that  when we increase $r$ in the term $\|u\|_{r, Q_T}^\alpha$ it will make the exponent
$\alpha$ smaller.

If $\beta<0$ and $\ell>r$, we raise both sides of \eqref{int1} to the power of $-\frac{r(\ell-2\lz)\beta}{\ell(r-2\lz)}$ to obtain
\begin{equation}
	\|u\|_{r,Q_T}^{-\frac{r(\ell-2\lz)\beta}{\ell(r-2\lz)}}\leq \|u\|_{\ell,Q_T}^{-\beta}\|u\|_{2\lz,Q_T}^{-\frac{2\lz(\ell-r)\beta}{\ell(r-2\lz)}}.\nonumber
\end{equation}
Incorporate this into \eqref{v1} to deduce
\begin{equation}
\|u\|_{\ell,Q_T}^{\beta}\|u\|_{r,Q_T}^{\alpha}\leq 	\|u\|_{r,Q_T}^{\alpha+\frac{r(\ell-2\lz)\beta}{\ell(r-2\lz)}}\|u\|_{2\lz,Q_T}^{-\frac{2\lz(\ell-r)\beta}{\ell(r-2\lz)}}.\nonumber
\end{equation} 
Once again, we have
\begin{equation}
	\alpha+\frac{r(\ell-2\lz)\beta}{\ell(r-2\lz)}<\alpha+\beta.\nonumber
\end{equation} The real challenge is how to combine the  two norms in \eqref{v1} when
\begin{equation}
\beta>0,\ \ \alpha<0, \ \ \mbox{and}	\ \ \ell>r.\nonumber
\end{equation} 
To address the third case, we consider the  function
\begin{equation}\label{fd}
	f(r)=\ln\left(\iqt |u|^rdxdt\right)\ \ \mbox{for $r\geq2\lz$}.
\end{equation}
It turns out that this function is convex. Moreover,
\begin{equation}\label{ffl}
	\lim_{r\ra\infty}\frac{\fr}{r}=	\lim_{r\ra\infty}\fp=\ln\left(\|u\|_{\infty, Q_T}\right).
\end{equation}
We may express the product in \eqref{v1} as
\begin{equation}
\|u\|_{\ell,Q_T}^{\beta}\|u\|_{r,Q_T}^{\alpha}	=e^{\frac{\beta f(\ell)}{\ell}+\frac{\alpha f(r)}{r}}.	\nonumber
\end{equation}
We solve the third case via suitable applications of various properties of $\fr$. This constitutes the core of our development. In a way, we can summarize our approach as follows:
\begin{equation}
	\|u\|_{2q, Q_T}^2\leq c\|u\|_{\ell, Q_T}^{\gamma_0}\|u\|_{r, Q_T}^{\beta_0}\|u\|_{2q, Q_T}^{\alpha_0}\leq c\|u\|_{r, Q_T}^\theta,\nonumber
\end{equation}
where 
\begin{eqnarray}
	&&\ell>r>2q,  \nonumber\\
&&	\gamma_0<0,\  \beta_0>0, \ \alpha_0 <0 \ \ \mbox{with}\ \ \alpha_0+\beta_0+\gamma_0\leq 1, \mbox{and} \nonumber\\
&&	\theta\in \left(0,\frac{r}{r-2\lz}\right).\label{the}
\end{eqnarray}
Thus, our basic strategy is to first decompose and then combine. The trick is how to achieve \eqref{the}.

Obviously, solutions to \eqref{ns1}-\eqref{ns3} are not unique in the $p$-component. If $(v,p)$ is a solution, so is $(v,p+g(t))$ for any function $g(t)$. We follow the tradition \cite{OP} to represent $p$ as a Newtonian potential (\cite{GT}, p. 18), which implies that
$$\irn p(x,t)dx=0.$$
The representation also enables us to employ certain properties of the Calder\'{o}n-Zygmund kernel \cite{CFL,CFL1}.


This work is organized as follows. In Section \ref{sec2}, we collect some relevant known results, while Section \ref{sec3} is devoted to the proof of Theorem \ref{thm}.

\section{Preliminary results}\label{sec2} In this section, we collect a few relevant known results.

The following lemma can be found in (\cite{D}, p.12).
\begin{lemma}\label{ynb}
	Let $\{y_n\}, n=0,1,2,\cdots$, be a sequence of positive numbers satisfying the recursive inequalities
	\begin{equation*}
		y_{n+1}\leq cb^ny_n^{1+\alpha}\ \ \mbox{for some $b>1, c, \alpha\in (0,\infty)$.}
	\end{equation*}
	If
	\begin{equation*}
		y_0\leq c^{-\frac{1}{\alpha}}b^{-\frac{1}{\alpha^2}},
	\end{equation*}
	then $\lim_{n\rightarrow\infty}y_n=0$.
\end{lemma}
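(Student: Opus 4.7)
The plan is to establish by induction a sharper geometric estimate of the form $y_n \leq y_0 r^n$ for an appropriate ratio $r \in (0,1)$, from which $\lim_{n\to\infty} y_n = 0$ follows immediately.

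First, I would make the ansatz $y_n \leq y_0 r^n$ and let the recursion dictate the choice of $r$. Substituting into $y_{n+1} \leq c b^n y_n^{1+\alpha}$, the inductive step requires
\[
c b^n \bigl(y_0 r^n\bigr)^{1+\alpha} \leq y_0 r^{n+1},
\]
which rearranges to $c\, y_0^{\alpha} \,(b\, r^{\alpha})^{n} \leq r$. To make this hold uniformly in $n$ without any loss, the only reasonable choice is to kill the geometric factor by imposing $b\, r^{\alpha} = 1$, that is, $r = b^{-1/\alpha}$. With this $r$, the condition collapses to $c\, y_0^{\alpha} \leq r = b^{-1/\alpha}$, which is equivalent to $y_0 \leq c^{-1/\alpha} b^{-1/\alpha^2}$. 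This is exactly the hypothesis, so the choice is forced and the induction will close.

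Having identified $r = b^{-1/\alpha}$, I would then write out the formal induction: the base case $y_0 \leq y_0$ is trivial, and the inductive step is precisely the computation above. Since $b > 1$ implies $r \in (0,1)$, one concludes $y_n \leq y_0 r^n \to 0$.

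I do not anticipate any real obstacle. The only conceptual step is spotting the correct exponent $r = b^{-1/\alpha}$, but this is essentially forced: the requirement that the bound propagate without an $n$-dependent loss pins down $b r^{\alpha} = 1$, and the precise form of the threshold $c^{-1/\alpha} b^{-1/\alpha^2}$ appearing in the hypothesis is a clear signal that this is the intended scaling.
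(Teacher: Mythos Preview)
Your argument is correct and is exactly the standard proof of this iteration lemma. The paper does not actually prove this statement; it simply cites it from DiBenedetto's book (\cite{D}, p.~12), so there is nothing to compare beyond noting that your inductive scheme $y_n \leq y_0\, b^{-n/\alpha}$ is precisely the classical one.
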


We also need some results from \cite{CFL,CFL1}.

\begin{definition}
	A function $k(x)$ on $\RN\setminus\{0\}$ is called a Calder\'{o}n-Zygmund kernel (in short, C-Z kernel) if:
	\begin{enumerate}
		\item[(i)] $k\in C^\infty(\RN\setminus\{0\})$;
		\item[(ii)]$k(x)$ is homogeneous of degree $-N$, i.e., $k(tx)=t^{-N}k(x)$;
		\item[(iii)] $\int_{\partial B_1(0)}k(x)d\mathcal{H}^{N-1}=0$.
	\end{enumerate}
\end{definition}
The most fundamental result concerning C-Z kernels \cite{CFL} is the following
\begin{lemma}
	Given a C-Z kernel $k(x)$,  we define
	\begin{equation}
		\mathcal{K}_\varepsilon f(x)=\int_{\RN\setminus B_\varepsilon(x)}k(x-y)f(y)dy\ \ \mbox{for $\varepsilon>0$ and $f\in L^q(\RN)$ with $q\in (1,\infty)$.}\nonumber
	\end{equation}
	Then:
	\begin{enumerate}
		\item[\textup{(CZ1)}] For each $f\in L^q(\RN)$ there exists a function $\mathcal{K}f\in L^q(\RN)$ such that
		\begin{equation}
			\lim_{\varepsilon\rightarrow 0}\|\mathcal{K}_\varepsilon f-\mathcal{K}f\|_{q,\RN}=0.\nonumber
		\end{equation}
		In this case we use the notation
		\begin{equation}
			\mathcal{K}f(x)=\textup{P.V.}k*f(x)=\textup{P.V.}\int_{\RN}k(x-y)f(y)dy.\nonumber
		\end{equation}
		\item[\textup{(CZ2)}] The operator $\mathcal{K}$ is bounded  on $ L^q(\RN)$. More precisely, we have
		\begin{equation}
			\|\mathcal{K}f\|_{q,\RN}\leq c\left(\int_{\partial B_1(0)}k^2(x)d\mathcal{H}^{N-1}\right)^\frac{1}{2}	\|f\|_{q,\RN},\nonumber
		\end{equation}
		where the positive number $c$ depends only on $N, q$.
	\end{enumerate}
\end{lemma}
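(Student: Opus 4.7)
The plan is to prove the classical Calder\'on--Zygmund theorem in three stages: $L^2$ boundedness through Fourier analysis, weak-type $(1,1)$ through the Calder\'on--Zygmund decomposition, and then Marcinkiewicz interpolation combined with duality to reach the full range $q\in(1,\infty)$.

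For the $L^2$ step, I would first observe that the cancellation $\int_{\partial B_1(0)} k\, d\mathcal{H}^{N-1}=0$ combined with the homogeneity of degree $-N$ allows one to interpret the principal-value integral against a Schwartz function as a tempered distribution $T_k$. The Fourier transform $\widehat{T_k}$ is a bounded, homogeneous-of-degree-zero function, and an explicit computation writes $\widehat{T_k}(\xi)$ as an integral of $k$ over the unit sphere against a bounded kernel depending on $\xi/|\xi|$; Cauchy--Schwarz on the sphere then yields $\|\widehat{T_k}\|_\infty \le C_N \left(\int_{\partial B_1(0)}k^2\,d\mathcal{H}^{N-1}\right)^{1/2}$. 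Plancherel's identity delivers the $L^2\to L^2$ bound with precisely the constant asserted in (CZ2).

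Next I would establish weak-type $(1,1)$. Given $f\in L^1$ and $\alpha>0$, apply the Calder\'on--Zygmund decomposition at height $\alpha$ to write $f=g+b$, where $\|g\|_\infty\le C\alpha$, $\|g\|_1\le\|f\|_1$, and $b=\sum_j b_j$ has each $b_j$ supported in a dyadic cube $Q_j$ with mean zero and $\sum|Q_j|\le C\|f\|_1/\alpha$. The good part is handled by the $L^2$ bound. The bad part is controlled outside the doubled cubes $Q_j^*$ by exploiting the smoothness and $-N$ homogeneity of $k$, which yield the H\"ormander condition $\int_{|x|>2|y|}|k(x-y)-k(x)|\,dx\le C$; combined with the mean-zero property of $b_j$, this gives $\int_{(Q_j^*)^c}|Kb_j|\,dx\le C\|b_j\|_1$. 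Marcinkiewicz interpolation between weak-$(1,1)$ and strong-$(2,2)$ then yields (CZ2) for $q\in(1,2]$. Since the transpose of $K$ has kernel $k(-x)$, which is again a C-Z kernel with the same $L^2$ norm on the sphere, duality covers $q\in[2,\infty)$.

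Finally, for (CZ1), I would first prove $K_\varepsilon f\to Kf$ pointwise almost everywhere for $f\in C_c^\infty(\RN)$ by using $\int_{\partial B_1(0)}k=0$ to rewrite $K_\varepsilon f(x)-Kf(x)$ as an integral of $k(x-y)$ against $f(y)-f(x)$ on a thin annulus, whose contribution vanishes as $\varepsilon\to 0$ by the Lipschitz regularity of $f$. Density of $C_c^\infty$ in $L^q$, together with a uniform-in-$\varepsilon$ bound $\|K_\varepsilon f\|_q\le C\|f\|_q$ obtained by running the same two-stage argument on the truncated kernels, then upgrades this to $L^q$ convergence via a standard three-epsilon argument. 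The main obstacle is keeping the constants sharp so that the final bound in (CZ2) genuinely involves $\|k\|_{L^2(\partial B_1(0))}$ rather than $\|k\|_{L^\infty(\partial B_1(0))}$; this forces the $L^2$ stage to be handled by Plancherel rather than by a pointwise comparison of kernels, and one must track that the H\"ormander estimate in the weak-$(1,1)$ step absorbs the sphere data into a purely dimensional constant.
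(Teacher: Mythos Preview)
The paper does not prove this lemma at all; it is stated in Section~\ref{sec2} as a known preliminary result, attributed to \cite{CFL,CFL1}, and is simply quoted for later use in deriving the pressure estimate \eqref{ns21}. Your proposal supplies the classical proof that the paper omits: Plancherel for the $L^2$ bound, Calder\'on--Zygmund decomposition for weak-$(1,1)$, Marcinkiewicz interpolation plus duality for the full range, and a density argument for the convergence in (CZ1). This is the standard route and is correct in outline.

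One point deserves a caveat. You note at the end that the H\"ormander constant in the weak-$(1,1)$ step should ``absorb the sphere data into a purely dimensional constant.'' In fact the H\"ormander integral $\int_{|x|>2|y|}|k(x-y)-k(x)|\,dx$ is controlled by the $C^1$ norm of $k$ on the unit sphere, not by its $L^2$ norm, so after Marcinkiewicz interpolation the $L^q$ constant you obtain for $q\in(1,2)$ would carry a factor depending on $\|\nabla k\|_{L^\infty(\partial B_1(0))}$ rather than the claimed $\|k\|_{L^2(\partial B_1(0))}$. To get the sharper constant in (CZ2) one typically invokes the method of rotations (writing $k$ as a superposition of directional Hilbert transforms and applying Minkowski's integral inequality), which directly produces the $L^2(\partial B_1(0))$ factor for all $q\in(1,\infty)$. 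For the paper's purposes this distinction is immaterial---only a \emph{finite} constant is ever used---but if you want the statement exactly as written, the interpolation route will not deliver it without modification.
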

Finally, the following two inequalities will be used without acknowledgment:
\begin{eqnarray*}
	(|a|+|b|)^\gamma&\leq&\left\{\begin{array}{ll}
		2^{\gamma-1}(|a|^\gamma+|b|^\gamma)&\mbox{if $\gamma\geq 1$},\\
		|a|^\gamma+|b|^\gamma&\mbox{if $\gamma\leq 1$}.
	\end{array}\right.
\end{eqnarray*}
Unless otherwise stated, the letter $c$  denotes a generic positive number, which only depends on $N, \nu$, and the various parameters we introduce. In particular, it is independent of $T$ and $\vo$.
\section{Proof of Theorem \ref{thm}}\label{sec3}
We first would like to point out that a strong solution  over a time interval is also smooth there \cite{OP}.
Therefore, in our subsequent calculations we may assume that $(u,p)$ is a classical solution. The nature of our argument is to turn a qualitative assumption into a quantitative estimate. To be precise, we show that if
\begin{equation}\label{uub}
	|v|\in L^\infty(Q_T),
\end{equation} then \eqref{ns24} must be true. The existence of a local-in-time classical solution is known under \eqref{ns31} due to a result in \cite{OP}. Here we show that such a solution never blows up. As a result, it can be extended as a global strong solution. 

Before we start the proof of Theorem \ref{thm}, we introduce a few lemmas. They are largely known,
and we include them here for completeness.%
\begin{lemma} We have 
	\begin{equation}\label{ns5}
		\|v\|_{2\lz,Q_T}\leq c(N,\nu)\|\vo\|_{2,\rn},
	\end{equation}
	where $\lz$ is given as in \eqref{lzd}.
\end{lemma}
\begin{proof}
In view of Theorem 3.2 in \cite{OP}, we may	 use $v$ as a test function in \eqref{ns1}. Upon doing so,  we derive
\begin{equation}\label{ns}
		\frac{1}{2}\frac{d}{dt}\irn|v(x,t)|^2dx+\irn(v\cdot \nabla)v \cdot v dx+\nu\irn|\nabla v|^2dx=0.
\end{equation}
Here we have used \eqref{ns2}. Use it again in the second term to obtain
\begin{equation}
	\irn(v\cdot \nabla)v \cdot v dx=\frac{1}{2}\irn (v\cdot \nabla)|v|^2dx= \frac{1}{2}\irn v\cdot \nabla|v|^2dx=0.\nonumber
\end{equation}
Collect this in \eqref{ns} and then integrate to deduce
\begin{equation}\label{ns7}
	\frac{1}{2}\irn|v(x,t)|^2dx+
	\nu\int_{0}^{t}\irn|\nabla v|^2dxd\tau= \frac{1}{2}
	\irn|v^{(0)}|^2dx.
\end{equation}
Recall 	the Sobolev inequality in the whole space to obtain
\begin{equation*}
	\|f\|_{\frac{2N}{N-2},\rn}\leq c(N)\|\nabla f\|_{2,\rn}\ \ \mbox{for each $f\in H^1(\rn)$}.
\end{equation*}
This together with  \eqref{ns7} implies that
\begin{eqnarray}
\int_{Q_T}|v|^{2\lz}dxdt&=&	\int_{Q_T}|v|^{\frac{4}{N}+2}dxdt\nonumber\\
&\leq&\int_{0}^{T}\left(\irn|v|^2dx\right)^{\frac{2}{N}}\left(\irn|v|^{\frac{2N}{N-2}}dx\right)^{\frac{N-2}{N}}dt\nonumber\\
	&\leq&\left(\sup_{0\leq t\leq T}\irn|v|^2dx\right)^{\frac{2}{N}}\int_{0}^{T}\left(\irn|v|^{\frac{2N}{N-2}}dx\right)^{\frac{N-2}{N}}dt\nonumber\\
	&\leq&c(N)\left(\sup_{0\leq t\leq T}\irn|v|^2dx\right)^{\frac{2}{N}}\int_{0}^{T}\irn|\nabla v|^{2}dxdt\nonumber\\
	&\leq& c(N,\nu)\left(\irn|v^{(0)}|^2dx\right)^{\frac{2}{N}+1},\label{ns4}
\end{eqnarray}
from which the lemma follows. 
\end{proof}

Now we turn our attention to the pressure $p$.
\begin{lemma}\label{cz}For each $ s  >1$ there is a positive number $c=c(s,N)$  with
	\begin{equation}\label{ns21}
		\|p\|_{  s  ,\rn}\leq c(s,N)  \|v\|_{2  s  ,\rn}^2.
	\end{equation}
\end{lemma}
\begin{proof}
	It follows from \eqref{ns2} that
	\begin{equation*}
		\nabla\cdot\left[	(v\cdot\nabla) v\right]=\pxi\left(v_j\pxj v_i\right)=\pxi\left[\pxj (v_jv_i)-v_i\pxj v_j\right]=\partial^2_{x_ix_j}(v_jv_i).
	\end{equation*}
	In view of \eqref{pdi}, we may invoke
	 the classical representation theorem (\cite{GT}, p. 17), thereby obtaining
	\begin{equation*}
		p(x,t)=\irn\Gamma(y-x)\partial^2_{y_iy_j}(v_jv_i)dy=\irn\partial^2_{y_iy_j}\Gamma(y-x)v_jv_idy,
	\end{equation*}
	where $\Gamma (x)$ is the fundamental solution of the Laplace equation, i.e.,
	$$\Gamma(x)=\frac{1}{N(N-2)\omega_N |x|^{N-2}},\ \ \omega_N= \mbox{the volume of the unit ball in $\rn$.}$$
	It is a well known fact that $\partial^2_{y_iy_j}\Gamma(y)$ is a Calder\'{o}n-Zygmund kernel. Thus, (CZ2) asserts that for each $  s  \in (1,\infty)$ there is a positive number $c=c(s,N) $  such that \eqref{ns21} holds. The proof is complete.
\end{proof}

Let $w$ be the solution of \eqref{w1}-\eqref{w2}. 
We easily infer from \eqref{vv} that the weak maximum principle remains true for solutions to \eqref{w1}. Hence, 
\begin{equation}\label{wm}
	\|w_i\|_{\infty, Q_T}\leq \|\vo_i\|_{\infty,\rn},\ \ i=1,\cdots, N.
\end{equation}
Appealing to the proof of \eqref{ns5}, we arrive at 
	\begin{equation}\label{ns6}
	\|w\|_{2\lz,Q_T}\leq c(N,\nu)\|\vo\|_{2,\rn}.
\end{equation}
Combine the preceding two estimates with \eqref{uvw} to obtain 
\begin{equation}\label{vw}
	\|u\|_{2\lz,Q_T}\leq c(N,\nu)\|\vo\|_{2,\rn}\ \ \mbox{and}\ \ \|u\|_{\infty,Q_T}\leq \|v\|_{\infty,Q_T}+\sqrt{N}\|\vo\|_{\infty,\rn}.
\end{equation}
In view of \eqref{uub} and \eqref{int}, we have
\begin{equation}
		|u|\in L^{r}(Q_T)\ \ \mbox{for each $r\geq 2\lz$.}\nonumber
\end{equation}
Let $\fr$ be given as in \eqref{fd}.
We can easily verify that $	f(r)$ is a convex function on $(2\lz, \infty)$. Indeed, let $2\lz<r_1<r_2$ and $\lambda\in [0,1]$. The interpolation inequality asserts
\begin{equation}
	\|u\|_{\lambda r_1+(1-\lambda)r_2, Q_T}\leq 	\|u\|_{ r_1, Q_T}^{\frac{\lambda r_1}{\lambda r_1+(1-\lambda)r_2}}	\|u\|_{r_2, Q_T}^{\frac{(1-\lambda)r_2}{\lambda r_1+(1-\lambda)r_2}}.\nonumber
\end{equation}
Raise both sides to the power of $\lambda r_1+(1-\lambda)r_2$ and then take logarithm  to derive
\begin{equation}
	f(\lambda r_1+(1-\lambda)r_2)\leq \lambda f(r_1)+(1-\lambda)f(r_2).\nonumber
\end{equation}
Remember that $u$ is a very ``nice'' function. We may assume that $\fr$ is a smooth function. As a result,  the convexity of $f$ can also be established by computing
\begin{eqnarray}
	f^\prime(r)&=&\frac{\iqt |u|^r\ln |u|dxdt}{\iqt |u|^rdxdt},\label{fp}\\
	f^{\prime\prime}(r)&=&\frac{\iqt |u|^r\ln^2 |u|dxdt\iqt |u|^rdxdt-\left(\iqt |u|^r\ln |u|dxdt\right)^2}{\left(\iqt |u|^rdxdt\right)^2}.\nonumber
\end{eqnarray}
Apply H\"{o}lder inequality in the expression for $f^{\prime\prime}(r)$ to derive
\begin{equation}\label{pp}
	f^{\prime\prime}(r)\geq 0\ \ \mbox{for $r\in (2\lz,\infty)$}.
\end{equation}

We are ready to verify \eqref{ffl}.
Obviously,  we may conclude from \eqref{int} that
\begin{equation}\label{mm9}
	\limsup_{r\ra\infty}\|u\|_{r,Q_T}\leq  \|u\|_{\infty,Q_T}.
\end{equation}
On the other hand, we have
\begin{equation}
	\left|\{|u|\geq \|u\|_{\infty,Q_T}-\ve\}\right|>0\ \ \mbox{for $\ve\in\left(0, \|u\|_{\infty,Q_T}\right)$.}\nonumber
\end{equation}
Subsequently,
\begin{equation}
	\|u\|_{r,Q_T}\geq\left(\left(\|u\|_{\infty,Q_T}-\ve\right)^{r}	\left|\{|u|\geq \|u\|_{\infty,Q_T}-\ve\}\right|\right)^{\frac{1}{r}}\ra \|u\|_{\infty,Q_T}-\ve\ \ \mbox{as $r\ra\infty$.}	\nonumber
\end{equation}
This together with \eqref{mm9} implies the first limit in \eqref{ffl}.
To see the second limit there, we obtain from \eqref{fp} that
\begin{eqnarray}\label{fpb}
		f^\prime(r)\leq \ln\left(\|u\|_{\infty, Q_T}\right).
\end{eqnarray}
On the other hand, the result \eqref{pp}, the convexity of $\fr$, implies
\begin{eqnarray}
	\fp\geq\frac{\fr-f(2\lz)}{r-2\lz}=\frac{\fr}{r}\frac{r}{r-2\lz}-\frac{f(2\lz)}{r-2\lz}\ra \ln\left(\|u\|_{\infty, Q_T}\right)\ \ \mbox{as $r\ra \infty$}.\label{fpi}
\end{eqnarray}
 Combining this with \eqref{fpb} yields \eqref{ffl}.

\begin{proof}[Proof of Theorem \ref{thm}] Define
	\begin{equation}\label{msrd}
		\ar=\|u\|_{r, Q_T}\ \ \mbox{for $r\in\left(2\lz,\infty\right)$.}
	\end{equation}
		Subsequently, let
	\begin{equation}\label{pdef}
	\varphi=\frac{u}{\ar}.
	\end{equation}
	Denote by $\varphi_i$ the $i$-th component of $\varphi$.
	Then we have
	\begin{equation}\label{dec3}
		\|\varphi_i\|_{r, Q_T}\leq 	\|\varphi\|_{r, Q_T}=1\ \ \mbox{for each $i\in\{1,\cdots,N\}$}.
	\end{equation}
	Divide through \eqref{u1} by $\ar$ to obtain
		\begin{equation}\label{jan1}
		\pt \vps+ (v\cdot\nabla) \vps-\nu\Delta \vps=-\ar^{-1}\partial_i p\ \ \mbox{in $Q_T$,}\ \ i=1,\cdots, N.
	\end{equation}
We shall employ a De Giorgi-type iteration scheme. Without any loss of generality, we may assume that
\begin{equation}\label{pii}
	\|\vps\|_{\infty, Q_T}=	\|\vps^+\|_{\infty, Q_T}.
\end{equation}
Otherwise, consider $-\vps$.
	Select 
	\begin{equation}
		k>0\nonumber
		\end{equation}
	as below.	Define
	\begin{eqnarray}
		k_n&=&2k-\frac{k}{2^{n}} \ \ \mbox{for $n=0,1,\cdots$.}\nonumber
	\end{eqnarray}
Then  the function
$$\Psi_{n}\equiv\left(\ln\vpss-\ln k_n\right)^+$$
is a legitimate test function  for \eqref{jan1}. 
Indeed, it is elementary to check that for each $\sigma\in(0,1]$ we have
$$\Psi_{n}\leq \frac{1}{\sigma k_n}\left(\vpss-k_n\right)^+.$$
This together with \eqref{pdef} implies
$$\Psi_{n}\in L^s(Q_T)\ \ \mbox{for each $s\geq 2\lz$}.$$
Set
\begin{eqnarray}
	\Omega_{n}(t)&=&	\{x\in\rn:\vps(x,t)\geq k_{n}\},
		\nonumber\\
		Q_{n}&=&\{(x,t)\in  Q_T: \vps(x,t)\geq k_{n}\}
		.\label{jan5}
\end{eqnarray} 
Subsequently,
$$|\nabla \Psi_{n}|=\left|\frac{1}{ \vpss }\nabla \vpss \chi_{Q_{n }}\right|\leq\frac{|\nabla v_i|}{ \ar k_n }.$$
We are ready to use $\Psi_{n} $ as a test function in \eqref{jan1}. Upon doing so, we obtain
\begin{eqnarray}
	\lefteqn{	\frac{d}{dt}\irn\int_{k_{n}}^{\vps}\left(\ln\mu-\ln k_n\right)^+d\mu   dx+\nu\irnn\frac{1}{ \vpss }\left|\nabla\vpss\right|^2 dx}\nonumber\\
	&=&-\irn (v\cdot\nabla) \vpss \Psi_{n} dx+\ar^{-1}\irn p\partial_i\Psi_{n} dx
.\label{ub1}
\end{eqnarray}
	Note from \eqref{ns2} that
	\begin{eqnarray}
		\irnn (v\cdot\nabla) \vpss\Psi_{n}dx&=&	\irn v\cdot\nabla \int_{k_{n}}^{\vpss}\left(\ln\mu-\ln k_n\right)^+d\mu dx=0.\nonumber
	\end{eqnarray}
As for the last term in \eqref{ub1}, we have
	\begin{eqnarray}
\ar^{-1}\irn p\partial_i\Psi_{n} dx
		&\leq&\ar^{-1}k_n^{-\frac{1}{2}}\irnn |p|\vpss^{-\frac{1}{2}}|\nabla\vpss| dx\nonumber\\
		&\leq&\frac{\nu}{4}\irnn\frac{1}{ \vpss }\left|\nabla\vpss\right|^2dx+\frac{\ar^{-2}k_n^{-1}}{\nu}\irnn p^2dx.
		\nonumber
			\end{eqnarray}
			Collect the preceding two results in \eqref{ub1} to get
	\begin{eqnarray}
		\frac{d}{dt}\irn\int_{k_{n}}^{\vpss}\left(\ln\mu-\ln k_n\right)^+d\mu   dx+\frac{\nu}{2}\irnn\frac{1}{ \vpss }\left|\nabla\vpss\right|^2 dx	\leq\frac{\ar^{-2}k_n^{-1}}{\nu}\irnn p^2dx
		.\label{ub2}
	\end{eqnarray}	
Evidently,
\begin{eqnarray}
	\irnn\frac{1}{ \vpss }\left|\nabla\vpss\right|^2dx&=&4\irn\left|\nabla\left(\sqrt{\vpss}-\sqrt{k_n}\right)^+\right|^2dx.\label{ub4}
\end{eqnarray}
We next claim 
	\begin{equation}\label{hope10}
		\int_{k_{n}}^{\vpss}\left(\ln\mu-\ln k_n\right)^+d\mu\geq  2\left[\left(\sqrt{\vpss}-\sqrt{k_n}\right)^+\right]^2.
	\end{equation}
	To see this, we compute
	\begin{eqnarray*}
		\left(\int_{k_{n}}^{\vpss}\left(\ln\mu-\ln k_n\right)^+d\mu\right)^{\prime\prime}&=&\vpss^{-1}\chi_{Q_{n }},\\
		\left(2\left[\left(\sqrt{\vpss}-\sqrt{k_n}\right)^+\right]^2\right)^{\prime\prime}&=&k_n^{\frac{1}{2}}\vpss^{-\frac{3}{2}}\chi_{Q_{n }}.
	\end{eqnarray*}
	We can easily verify  that
	\begin{equation*}
		\left(\int_{k_{n}}^{\vpss}\left(\ln\mu-\ln k_n\right)^+d\mu\right)^{\prime\prime}\geq \left(2\left[\left(\sqrt{\vpss}-\sqrt{k_n}\right)^+\right]^2\right)^{\prime\prime}\ \ \mbox{in $Q_n$.}
	\end{equation*}
	Integrate this inequality twice and choose the constant of integration appropriately each time to obtain \eqref{hope10}.

Recall  \eqref{u2} to obtain
$$\left.\int_{k_{n}}^{\vpss}\left(\ln\mu-\ln k_n\right)^+d\mu\right|_{t=0}=0.$$
With this in mind, we integrate \eqref{ub2} with respect to $t$ and then collect  \eqref{ub4}, \eqref{wm}, and \eqref{hope10} in the resulting inequality to deduce
	\begin{eqnarray}
\lefteqn{\sup_{0\leq t\leq T}\irn\left[\left(\sqrt{\vpss}-\sqrt{k_n}\right)^+\right]^2dx}\nonumber\\
&&+\irnt\left|\nabla\left(\sqrt{\vpss}-\sqrt{k_n}\right)^+\right|^2dxdt
		\leq c(\nu)\ar^{-2}k_n^{-1}\int_{Q_{n }} p^2dxdt.\label{hop20}
	\end{eqnarray}
%
Set
\begin{equation}\label{ynd}
	y_n=|Q_n|.
\end{equation}
%
We proceed to show that $\{y_n\}$ satisfies the condition in Lemma \ref{ynb}. 
By calculations similar to those in \eqref{ns4},  we have
\begin{eqnarray}
	\lefteqn{	\irnt\left[\left( \sqrt{\vps}-\sqrt{k_n}\right)^+\right]^{2\lz}dxdt}\nonumber\\
	&\leq&\int_{0}^{T}\left(\irn\left[\left( \sqrt{\vps}-\sqrt{k_n}\right)^+\right]^{2}dx \right)^{\frac{2}{N}}\left(\irn\left[\left( \sqrt{\vps}-\sqrt{k_n}\right)^+\right]^{\frac{2N}{N-2}}dx\right)^{\frac{N-2}{N}}dt\nonumber\\
	&\leq& c(N)\left(\sup_{0\leq t\leq T}\irn\left[\left( \sqrt{\vps}-\sqrt{k_n}\right)^+\right]^{2}dx \right)^{\frac{2}{N}}\irnt\left|\nabla\left( \sqrt{\vps}-\sqrt{k_n}\right)^+\right|^2dxdt\nonumber\\
	&\leq& c(N,\nu)\left(\ar^{-2} k_n^{-1}\int_{Q_{n}} p^2dxdt\right)^{\lz}.\label{rub3}
\end{eqnarray}
Here we have applied \eqref{hop20} in the last step.
It is easy to verify that
\begin{eqnarray*}
	\irnt\left[\left( \sqrt{\vps}-\sqrt{k_n}\right)^+\right]^{2\lz}dxdt&\geq&
	\int_{Q_{n+1}}\left[\left( \sqrt{\vps}-\sqrt{k_n}\right)^+\right]^{2\lz}dxdt\nonumber\\
	&\geq&\left(\sqrt{k_{n+1}}-\sqrt{k_n}\right)^{2\lz}|Q_{n+1}|
	\nonumber\\
	\nonumber\\
	&	\geq&\frac{k^{\lz}|Q_{n+1}|}{2^{2\lz(n+3)}}.
\end{eqnarray*}
Combining this with \eqref{rub3} yields
\begin{equation}
	y_{n+1}=|Q_{n+1}|^{\frac{N}{N+2}+\frac{2}{N+2}}\leq \frac{c(N,\nu)4^{n}}{k^2\ar^{2}}\int_{Q_{n}} p^2dxdt|Q_{n+1}|^{\frac{2}{N+2}}\leq \frac{c(N,\nu)4^{n}}{k^2\ar^{2}}\int_{Q_{n}} p^2dxdty_n^{\frac{2}{N+2}}.\label{rub4}
\end{equation}
	Fix
\begin{eqnarray}
	q>N+2.\label{qcon}
\end{eqnarray}
Then we can conclude from \eqref{ns21} that
\begin{eqnarray*}
	\int_{Q_{n}}p^2dxdt&\leq&\left(\irnt|p|^qdxdt\right)^{\frac{2}{q}}|Q_{n}|^{1-\frac{2}{q}}
	\leq c(N,q)\|v\|_{2q,Q_T}^{4}y_n^{1-\frac{2}{q}}.
\end{eqnarray*}
Use this in \eqref{rub4} to derive
\begin{eqnarray}
	y_{n+1}
	&\leq &	\frac{c(N,\nu,q) 4^{n}\|v\|_{2q,Q_T}^{4}}{k^2\ar^2}y_n^{1+\alpha},\label{ns11}
\end{eqnarray}
where
\begin{eqnarray}
	\alpha&=&-\frac{2}{q}+\frac{2}{N+2} =\frac{2(q-N-2)}{q(N+2)}>0.\label{adef}
\end{eqnarray}
We introduce a new parameter
\begin{equation}\label{ldef}
	\ell>r.
\end{equation}
Subsequently,  take 
\begin{equation}\label{kcon6}
\max\left\{L_1 \|\vp\|_{\ell,Q_T}^{\frac{\ell}{\ell-r}},\ L_2\ar^{-1}\|u\|_{2q,Q_T}^{\frac{ q}{q-\lz}}\right\} \leq k, %
\end{equation}
where $L_1$ and $L_2$ are two positive numbers to be determined. Note that $k$ is roughly $\|\vp\|_{\infty,Q_T}$ and the exponent of $\|\vp\|_{\ell,Q_T}$ in the above inequality is so chosen that  
\begin{eqnarray}
	\|\vp\|_{\ell,Q_T}^{\frac{\ell}{\ell-r}}&\leq&\left[\|\vp\|_{\infty,Q_T}^{\frac{\ell-r}{\ell}}\|\vp\|_{r,Q_T}^{\frac{r}{\ell}}\right]^{\frac{\ell}{\ell-r}}\nonumber\\
	&\leq& \|\vp\|_{\infty,Q_T}\|\vp\|_{r,Q_T}^{\frac{r}{\ell-r}}=\|\vp\|_{\infty,Q_T}.\label{jm1}
\end{eqnarray}
The last step is due to \eqref{dec3}. The exponent of $\|v\|_{2q,Q_T}$ in \eqref{kcon6} is given with similar consideration in mind. 
As we shall see, the terms between the two big brackets in \eqref{kcon6} will be absorbed into either $\|\vp\|_{\infty,Q_T}$ or $\|v\|_{\infty,Q_T}$, leaving behind norms with negative powers. This is the key to our approach.

 We easily see from \eqref{kcon6} that for each $j>0$ there hold
\begin{equation}
	L_1^{\alpha j} \|\vp\|_{\ell,Q_T}^{\frac{\alpha j\ell}{\ell-r}}\leq k^{\alpha j}\ \ \mbox{and}\ \ 	L_2^{j\alpha+2}\ar^{-(j\alpha+2)}\|u\|_{2q,Q_T}^{\frac{(j\alpha+2)q}{q-\lz}}\leq k^{j\alpha+2}.\nonumber
\end{equation}
Collect these two inequalities in \eqref{ns11} to deduce
\begin{equation*}
	y_{n+1}\leq \frac{c(N,\nu,q) 4^n\ar^{j\alpha}\|v\|_{2q,Q_T}^{4} k^{2j\alpha}}{L_1^{\alpha j} \|\vp\|_{\ell,Q_T}^{\frac{\alpha j\ell}{\ell-r}}L_2^{j\alpha+2}\|u\|_{2q,Q_T}^{\frac{(j\alpha+2)q}{q-\lz}}} y_n^{1+\alpha}.
\end{equation*}
 Recall \eqref{ynd}, \eqref{jan5}, and \eqref{dec3} to deduce
 	\begin{eqnarray*}
 		y_0&=&
 		|Q_0|\leq \irnt\left(\frac{\vps^+}{k}\right)^{r}dxdt\leq\frac{1}{k^r}.
 	\end{eqnarray*}
 Take
 	\begin{equation}\label{rcon}
 		r>2j.
 	\end{equation}
 	Subsequently, we can pick $k$ so large that
 	\begin{eqnarray}
 		\frac{1}{k^{r-2j}}\leq \frac{L_1^{j}L_2^{\frac{j\alpha+2}{\alpha}}\|\vp\|_{\ell,Q_T}^{\frac{j\ell}{\ell-r}}\|u\|_{2q,Q_T}^{\frac{(j\alpha+2)q}{\alpha(q-\lz)}}}{ \left[c(N,\nu,q)\right]^{\frac{1}{\alpha}}4^{\frac{1}{\alpha^2}}\ar^{j}\|v\|_{2q,Q_T}^{\frac{4}{\alpha}}}.\label{jm7}
 	\end{eqnarray}
 	We are now in a position to apply Lemma \ref{ynb}. Upon doing so, we arrive at 
 	$$\lim_{n\ra \infty}y_n=|\{\vps\geq 2k\}|=0.$$ 
 This together with \eqref{pii} implies
 	\begin{equation}
 		\sup_{Q_T}	\|\vps\|_{\infty, Q_T}\leq 2k.\nonumber
 	\end{equation}
 	Subsequently,
 	\begin{equation}\label{jm8}
 		\sup_{Q_T}	\|\vp\|_{\infty, Q_T}\leq 2\sqrt{N}k.
 	\end{equation}
 	According to  \eqref{kcon6} and \eqref{jm7}, it is enough for us to take
 	\begin{eqnarray}
 		k&=&L_1 \|\vp\|_{\ell,Q_T}^{\frac{\ell}{\ell-r}}+L_2\ar^{-1}\|u\|_{2q,Q_T}^{\frac{ q}{q-\lz}} \nonumber\\
 		&&+ \left[c(N,\nu,q)\right]^{\frac{1}{\alpha(r-2j)}}4^{\frac{1}{\alpha^2(r-2j)}}L_1^{-\frac{j}{r-2j}}L_2^{-\frac{2+j\alpha}{(r-2j)\alpha}}\|\vp\|_{\ell,Q_T}^{-\beta_1}\ar^{\frac{j}{ r-2j}}\|u\|_{2q,Q_T}^{-\alpha_1}\|v\|_{2q,Q_T}^{\frac{4}{(r-2j)\alpha}},\label{jt10}
 	\end{eqnarray}
 	where
 	\begin{equation}\label{bo}
 		\beta_1=\frac{j\ell}{(r-2j)(\ell-r)},\ \ \alpha_1=\frac{(j\alpha+2)q}{\alpha(q-\lz)(r-2j)}.
 	\end{equation}
 	In view of \eqref{jm1}, we have
 	\begin{eqnarray*}
 		2\sqrt{N} L_1\|\vp\|_{\ell,Q_T}^{\frac{\ell}{\ell-r}}\leq  2\sqrt{N}L_1\|\vp\|_{\infty,Q_T}.
 	\end{eqnarray*}
 	We take 
 	$$2\sqrt{N} L_1=\frac{1}{2}.$$
 	Plug \eqref{jt10} into \eqref{jm8} and then use the above choice for $L_1$  to obtain
 	\begin{eqnarray*}
 \|\vp\|_{\infty,Q_T}
 		&\leq&4\sqrt{N}L_2\ar^{-1}\|u\|_{2q,Q_T}^{\frac{ q}{q-\lz}}\nonumber\\ &&+4\sqrt{N}\left[c(N,\nu,q)\right]^{\frac{1}{r-2j}}(4\sqrt{N})^{\frac{j}{r-2j}}L_2^{-\frac{2+j\alpha}{(r-2j)\alpha}}\|\vp\|_{\ell,Q_T}^{-\beta_1}\ar^{\frac{j}{ r-2j}}\|u\|_{2q,Q_T}^{-\alpha_1}\|v\|_{2q,Q_T}^{\frac{4}{(r-2j)\alpha}}.
 	\end{eqnarray*}
 	Recall \eqref{pdef} and \eqref{msrd} to derive
 	\begin{eqnarray*}
 		\|u\|_{\infty,Q_T}&\leq& 4\sqrt{N}L_2\|u\|_{2q,Q_T}^{\frac{ q}{q-\lz}}\nonumber\\
 		&&+4\sqrt{N}\left[c(N,\nu,q)\right]^{\frac{1}{r-2j}}(4\sqrt{N})^{\frac{j}{r-2j}}L_2^{-\frac{2+j\alpha}{(r-2j)\alpha}}\|u\|_{\ell,Q_T}^{-\beta_1}\|u\|_{r,Q_T}^{\frac{r-j}{ r-2j}+\beta_1}\|u\|_{2q,Q_T}^{-\alpha_1}\|v\|_{2q,Q_T}^{\frac{4}{(r-2j)\alpha}}.\label{ap5}
 	\end{eqnarray*}
 	As in \eqref{jm1}, we calculate, with the aid of the interpolation inequality,  that
 	\begin{eqnarray*}
 		4\sqrt{N}L_2\|u\|_{2q,Q_T}^{\frac{ q}{q-\lz}}&\leq&4\sqrt{N}L_2\left[\|u\|_{\infty,Q_T}^{\frac{q-\lz}{q}}\|u\|_{2\lz,Q_T}^\frac{\lz}{q}\right]^{\frac{ q}{q-\lz}}\nonumber\\
 		&=&4\sqrt{N}L_2\|u\|_{\infty,Q_T}\|u\|_{2\lz,Q_T}^\frac{\lz}{q-\lz}.
 	\end{eqnarray*}
 	We pick $L_2$ so that the coefficient of $\|u\|_{\infty,Q_T}$ in the last term in the above inequality  is $\frac{1}{2}$, i.e.,
 	\begin{equation*}
 		4\sqrt{N}L_2\|u\|_{2\lz,Q_T}^\frac{\lz}{q-\lz}	=\frac{1}{2}.
 	\end{equation*}
 	Combining this with \eqref{ap5} yields 
 	\begin{eqnarray}
 		\|u\|_{\infty,Q_T}&\leq&8\sqrt{N}c_1\|u\|_{2\lz,Q_T}^{s_1}\|u\|_{\ell,Q_T}^{-\beta_1}\|u\|_{r,Q_T}^{\frac{r-j}{ r-2j}+\beta_1}\|u\|_{2q,Q_T}^{-\alpha_1}\|v\|_{2q,Q_T}^{\frac{4}{(r-2j)\alpha}} ,\label{hhap2}
 	\end{eqnarray}
 	where
 	\begin{eqnarray}%
 		c_1&=&\left[c(N,\nu,q)\right]^{\frac{1}{r-2j}}(4\sqrt{N})^{\frac{j}{r-2j}}(8\sqrt{N})^{\frac{2+j\alpha}{(r-2j)\alpha}}=\left[c(N,\nu,q,j)\right]^{\frac{1}{r-2j}},\label{c1}\\
 		s_1&=&\frac{(2+j\alpha)\lz}{(r-2j)\alpha(q-\lz)}=\frac{c(N,q,j)}{r-2j}.\label{s1}
 	\end{eqnarray}
 	Recall \eqref{uvw} to obtain
 \begin{equation}
 	\|v\|_{2q,Q_T}\leq \|u\|_{2q,Q_T}+\|w\|_{2q,Q_T}.\nonumber
 \end{equation}	
 Fix
 \begin{equation}
 	q_0>N+2.\nonumber
 \end{equation}We may assume
 	\begin{equation}\label{wu}
 		\|w\|_{2q,Q_T}	\leq \|u\|_{2q,Q_T}\ \ \mbox{for each $q\in [q_0, 2q_0]$.}
 	\end{equation}
 	Indeed, if this is not true, i.e., 
 	\begin{equation}
 		\|w\|_{2q,Q_T}	\geq \|u\|_{2q,Q_T}\ \ \mbox{for some $q\in [q_0, 2q_0]$.}\nonumber
 	\end{equation} then we can conclude from \eqref{int}, \eqref{wm}, and \eqref{ns6} that
 	\begin{eqnarray}
 		\|v\|_{2q,Q_T}&\leq& 2\|w\|_{2q,Q_T}\nonumber\\
 		&\leq& 2\|w\|_{\infty,Q_T}^{\frac{q-\lz}{q}}\|w\|_{2\lz,Q_T}^{\frac{\lz}{q}}\nonumber\\
 		&\leq& c(N,\nu)\|\vo\|_{\infty,Q_T}^{\frac{q-\lz}{q}}\|\vo\|_{2,Q_T}^{\frac{\lz}{q}}.\nonumber
 	\end{eqnarray}
 	Combine this with \eqref{pi1} to deduce
 	\begin{eqnarray}
 		\|v\|_{\infty, Q_T}&\leq&2\sqrt{N}\|\vo\|_{\infty,\rn}+c(N,q,\nu)\|\vo\|_{2,\rn}+c(N,\nu,q)\|\vo\|_{\infty,Q_T}^{\frac{2(q-\lz)}{q}}\|\vo\|_{2,Q_T}^{\frac{2\lz}{q}}.\nonumber
 	\end{eqnarray}
 	Note that the constant $c(N,q,\nu)$ is finite for $q\in [q_0, 2q_0]$. Thus, \eqref{ns24} follows. We must point out that the introduction of $q_0$ is necessary due to \eqref{li1} and \eqref{li2}.
 	
 	Under \eqref{wu}, \eqref{hhap2} becomes
 	\begin{eqnarray}
 		\|u\|_{\infty,Q_T}&\leq&8\sqrt{N} 2^{\frac{4}{(r-2j)\alpha}}c_1\|u\|_{2\lz,Q_T}^{s_1}\|u\|_{\ell,Q_T}^{-\beta_1}\|u\|_{r,Q_T}^{\frac{r-j}{ r-2j}+\beta_1}\|u\|_{2q,Q_T}^{\frac{4}{(r-2j)\alpha}-\alpha_1}\nonumber\\
 		&=&8\sqrt{N}c_1\|u\|_{2\lz,Q_T}^{s_1}\|u\|_{\ell,Q_T}^{-\beta_1}\|u\|_{r,Q_T}^{\frac{r-j}{ r-2j}+\beta_1}\|u\|_{2q,Q_T}^{\frac{4}{(r-2j)\alpha}-\alpha_1}.\label{hhap3}%
 	\end{eqnarray}
 	We have incorporated $2^{\frac{4}{(r-2j)\alpha}}$ into $c_1$ due to its definition in \eqref{c1}.
  
 	Next, we will demonstrate how to combine the last three norms in \eqref{hhap3} into a single one. Our first attempt in this direction yields the following result.
 	\begin{clm}\label{clm4}Let $q$ be given as in \eqref{wu}.
 	Define
 	\begin{equation}
 		\mq=\frac{2(q-2\lz)}{\alpha q}.\label{mqd}
 	\end{equation}
 	Then  there exist two positive numbers $c(N,q)$ and $c(N,\nu,q)$ such that
 	\begin{equation}\label{wif}
 		\|u\|_{\infty,Q_T}\leq 8\sqrt{N}\left[c(N,\nu,q)\right]^{\frac{1}{r-\mq}}\|u\|_{2\lz,Q_T}^{\frac{c(N,q)}{r-\mq}}\|u\|_{r,Q_T}^{\frac{r}{r-\mq}}\ \ \mbox{for each $r>\mq$}.
 	\end{equation}
 \end{clm}
 \begin{proof} 
 	Plug \eqref{bo} into \eqref{hhap3} and take $\ell\ra \infty$ in the resulting inequality   to derive
 \begin{eqnarray}
 		\|u\|_{\infty,Q_T}&\leq& 8\sqrt{N}c_1\|u\|_{2\lz,Q_T}^{s_1}\|u\|_{\infty,Q_T}^{-\frac{j}{r-2j}}\|u\|_{r,Q_T}^{\frac{r}{ r-2j}}\|u\|_{2q,Q_T}^{-\frac{(\alpha j+2)q}{\alpha(r-2j)(q-\lz)}+\frac{4}{(r-2j)\alpha}}\nonumber\\
 		&=& 8\sqrt{N}c_1\|u\|_{2\lz,Q_T}^{s_1}\|u\|_{\infty,Q_T}^{-\frac{j}{r-2j}}\|u\|_{r,Q_T}^{\frac{r}{ r-2j}}\|u\|_{2q,Q_T}^{\frac{q(\mq-j)}{(r-2j)(q-\lz)}}.\label{e3}
 	\end{eqnarray}
 		Here $c_1$ and $s_1$ remain the same as before because they do not depend on $\ell$.
 	By virtue of \eqref{int}, we have
 	\begin{eqnarray}
 		\|u\|_{2q,Q_T}^{\frac{jq}{(r-2j)(q-\lz)}}&\leq&\left[	\|u\|_{\infty,Q_T}^{\frac{q-\lz}{q}}	\|u\|_{2\lz,Q_T}^\frac{\lz}{q}\right]^{\frac{jq}{(r-2j)(q-\lz)}}\nonumber\\ &=&\|u\|_{\infty,Q_T}^{\frac{j}{(r-2j)}}\|u\|_{2\lz,Q_T}^{\frac{j\lz}{(r-2j)(q-\lz)}}.\nonumber
 	\end{eqnarray}
 	Incorporating this into \eqref{e3} yields
 	\begin{eqnarray}
 		\|u\|_{\infty,Q_T}&\leq&8\sqrt{N}c_1\|u\|_{2\lz,Q_T}^{s_0}\|u\|_{r,Q_T}^{\frac{r}{ r-2j}}\|u\|_{2q,Q_T}^{\frac{q(\mq-2j)}{(r-2j)(q-\lz)}},\label{e5}
 	\end{eqnarray}
 	where 
 	\begin{equation}
 		s_0=s_1+\frac{j\lz}{(r-2j)(q-\lz)}=\frac{2(1+j\alpha)\lz}{(r-2j)\alpha(q-\lz)}.	\nonumber
 	\end{equation}
 	We choose
 	\begin{equation}
 		j=\frac{\mq}{2}.\nonumber
 	\end{equation}
 	As a result, the last exponent in \eqref{e5} is $0$.
 	Substitute this value of $j$ into \eqref{e5} to  arrive at \eqref{wif}. The proof is complete.
 \end{proof}
 An easy consequence of the preceding claim is that \eqref{intr} holds.
 Indeed, plug \eqref{adef} into \eqref{mqd} to obtain
 \begin{equation}
 	\mq=\frac{(N+2)(q-2\lz)}{q-N-2}.\nonumber
 \end{equation}
 We easily see from \eqref{lzd} that $\mq$ is a strictly decreasing function of $q$ on $(N+2, \infty)$. 
 In addition,
 \begin{equation}\label{mql}
 	\lim_{q\ra\infty}\mq=N+2\ \ \mbox{and}\ \ 	\lim_{q\ra(N+2)^+}\mq=\infty.
 \end{equation} 
 Consequently, we have
 \begin{eqnarray}
 	\mq>N+2=N\lz.\label{mn}
 \end{eqnarray}
 Under the assumption in \eqref{intr}, we  take  $q_0$ to be the unique solution to the equation
 \begin{equation}
 \mq=s.\nonumber
 \end{equation}
 As a result
 \begin{equation}
 	\mq<s\ \ \mbox{for $q\in(q_0, 2q_0]$.}\nonumber
 \end{equation}
 We can pick $q=2q_0, r=s$ in \eqref{wif}. This is a partial recovery of the result by Serrin, Prodi, and Ladyzenskaja we mentioned in the introduction.
 
We easily infer from \eqref{mql} that there is a solution  to the equation
\begin{equation}
	\mq=2q \ \ \mbox{in $(N+2, \infty)$.}\nonumber
\end{equation}
The solution is also unique due to the strict monotonicity of $\mq$.
From here on, we take $q_0$ to be $2$ times the solution 
and further require
\begin{equation}\label{qcon1}
	q\in [q_0, 2q_0].
\end{equation}
Consequently,
\begin{equation}\label{qlb}
\min_{q\in [q_0, 2q_0]}\left(2q-\mq\right)>0.
\end{equation}
Without any loss of generality, we may assume
\begin{equation}\label{fp1}
	f(s)>0 \ \ \mbox{for each $s\in[2q, \infty)$.}
\end{equation}
Indeed, suppose that this is not true. That is,
\begin{equation}
	f(s)\leq0 \ \ \mbox{for some $s\in[2q, \infty)$.}\nonumber
\end{equation}
It immediately follows from \eqref{fd} that 
\begin{equation}
	\|u\|_{s,Q_T}\leq 1.\nonumber
\end{equation}
In view of \eqref{qlb}, we can incorporate this into \eqref{wif} and then apply the first estimate in \eqref{vw} in the resulting inequality  to get
\begin{eqnarray}
	\|u\|_{\infty,Q_T}&\leq &8\sqrt{N}\left[c(N,\nu, q)\right]^{\frac{1}{s-\mq}}\|u\|_{2\lz,Q_T}^{\frac{c(N,q)}{s-\mq}}\nonumber\\
	&\leq&8\sqrt{N} \left[c(N,\nu, q)\right]^{\frac{1}{s-\mq}}\|\vo\|_{2,\rn}^{\frac{c(N,q)}{s-\mq}}\nonumber\\
	&\leq&8\sqrt{N} \left[\max\left\{1,c(N,\nu, q)\right\}\right]^{\frac{1}{2q-\mq}}\left[\max\left\{1, \|\vo\|_{2,\rn}\right\}\right]^{\frac{c(N,q)}{2q-\mq}}.\nonumber
\end{eqnarray}
  With the aid of \eqref{wm}, we calculate
 \begin{eqnarray}
 		\|v\|_{\infty,Q_T}&\leq &	\|u\|_{\infty,Q_T}+	\|w\|_{\infty,Q_T}\nonumber\\
 		&\leq &\sqrt{N}\|\vo\|_{\infty,\rn}+8\sqrt{N}\left[\max\left\{1,c(N,\nu, q)\right\}\right]^{\frac{1}{2q-\mq}}\left[\max\left\{1, \|\vo\|_{2,\rn}\right\}\right]^{\frac{c(N,q)}{2q-\mq}}.\nonumber
 \end{eqnarray}
 We obtain \eqref{ns24} by minimizing the right-hand over the interval in \eqref{qcon1}.


We can also assume that
\begin{equation}
	\mbox{$\frac{\fr}{r}=\ln\left(\|u\|_{r,Q_T}\right)$ is an increasing function of $r$ on $(2q, \infty)$.}\nonumber
\end{equation} 
Obviously, this is a consequence of
\begin{equation}\label{fri1}
	\left(\frac{\fr}{r}\right)^\prime=\frac{r\fp-\fr}{r^2}>0\ \ \mbox{for each $r\in [2q, \infty)$.}
\end{equation}
Suppose that the preceding inequality is not true, i.e., 
\begin{equation}
	\fp\leq \frac{\fr}{r} \ \mbox{for some $r\in [2q, \infty)$.}\nonumber
\end{equation}
This combined with the inequality in \eqref{fpi} implies
\begin{equation}
	\frac{\fr-f(2\lz)}{r-2\lz}\leq  \frac{\fr}{r},\nonumber
\end{equation}
from which it follows
\begin{equation}
	\|u\|_{r, Q_T}\leq\|u\|_{2\lz, Q_T}.\nonumber
\end{equation}
Collect this in \eqref{wif} to obtain
\begin{eqnarray}
	\|u\|_{\infty,Q_T}&\leq& 8\sqrt{N}\left[c(N,\nu, q)\right]^{\frac{1}{r-\mq}}\|u\|_{2\lz,Q_T}^{\frac{c(N,q)}{r-\mq}+\frac{r}{r-\mq}}\nonumber\\
	&\leq &8\sqrt{N}\left[c(N,\nu, q)\right]^{\frac{1}{r-\mq}+\frac{r}{r-\mq}}\|\vo\|_{2,\rn}^{\frac{c(N,q)}{r-\mq}+\frac{r}{r-\mq}}\nonumber\\
	&\leq&8\sqrt{N}\left[\max\left\{1, c(N,\nu, q)\right\}\right]^{\frac{1}{2q-\mq}+\frac{2q}{2q-\mq}}\left[\max\left\{1, \|\vo\|_{2,\rn} \right\}\right]^{\frac{c(N,q)}{2q-\mq}+\frac{2q}{2q-\mq}}.\nonumber
\end{eqnarray}
This implies \eqref{ns24}.




Obviously,  the gap between Claim \ref{clm4} and Theorem \ref{thm} is still substantial. This means that we must make a better choice of our parameters than the one in Claim \ref{clm4}. 
Nonetheless,  Claim \ref{clm4} provides important insights. 
It shows that it is possible to make the last remaining exponent as close to $1$ as possible when we combine the three norms in \eqref{hhap3}. Unfortunately, in the preceding Claim the parameter $\mq$ is too large. To see why this is the problem, we make use of \eqref{int} in \eqref{wif} to obtain
\begin{equation}
	\|u\|_{\infty,Q_T}\leq 8\sqrt{N}\left[c(N,\nu, q)\right]^{\frac{1}{r-\mq}} \|v\|_{2\lz,Q_T}^{\frac{c(N,q)}{r-\mq}+\frac{2\lz}{r-\mq}}\|u\|_{\infty,Q_T}^{\frac{r-2\lz}{r-m_q}}.\nonumber
\end{equation}
This inequality is useful only when
\begin{equation}
	\frac{r-2\lz}{r-M_q}<1.\nonumber
\end{equation}
This necessitates $2\lz>\mq$, which contradicts \eqref{mn}.

On the other hand, we recall \eqref{bo} to calculate the total sum of the last three exponents in \eqref{hhap2}, thereby obtaining
\begin{eqnarray}\label{tot}
	\lefteqn{\frac{r-j}{ r-2j}+\frac{4}{\alpha(r-2j)}-\alpha_1}\nonumber\\
	&	=&\frac{r-j}{ r-2j}+\frac{4}{\alpha(r-2j)}-\frac{(j\alpha+2)q}{\alpha(q-\lz)(r-2j)}\nonumber\\
	&=&1+\frac{	2(q-2\lz)-\lz\alpha j}{\alpha(r-2j)(q-\lz)}.
\end{eqnarray}
 Thus, the total sum can be made less that $1$, provided that we choose
\begin{eqnarray}
	j&\geq&\frac{2(q-2\lz)}{\lz\alpha}
	.\label{jl1}
\end{eqnarray} 
	This suggests that our goal is achievable as long as we can combine norms in such a way that the total sum of the exponents involved does not increase too much. 	
As we noted before, whenever we move a positive exponent from a small indexed norm to a larger one via the interpolation inequality it gets smaller. The reverse, which is more or less  the third case mentioned in the introduction, is much more tricky. This constitutes the main difficulty. The opposite is true with  negative exponents.
	

Our goal is to show
	\begin{equation}
		\|u\|_{\infty, Q_T}\leq c_1\|u\|_{2\lz,Q_T}^{s_1}\|u\|_{r, Q_T}^\theta\ \ \mbox{for some $\theta\in\left(0, \frac{r}{r-2\lz}\right)$},\nonumber
	\end{equation}
	and then apply \eqref{int}. 
	We will proceed along this line of thinking.  It turns out that relations among various parameters are rather delicate.
	


	To continue the proof of \eqref{ns24},  
	plug \eqref{bo} into \eqref{hhap3} to deduce
	\begin{eqnarray}
		\|u\|_{\infty,Q_T}&\leq&8\sqrt{N}\  c_1\|u\|_{2\lz,Q_T}^{s_1}\|u\|_{\ell,Q_T}^{-\frac{j\ell}{(r-2j)(\ell-r)}}\|u\|_{r,Q_T}^{\frac{r-j}{ r-2j}+\frac{j\ell}{(r-2j)(\ell-r)}}\|u\|_{2q,Q_T}^{\frac{q(\mq-j)}{(r-2j)(q-\lz)}}\nonumber\\
		&=&8\sqrt{N}\  c_1\|u\|_{2\lz,Q_T}^{s_1}\|u\|_{\ell,Q_T}^{-\frac{j\ell}{(r-2j)(\ell-r)}}\|u\|_{r,Q_T}^{\frac{r(\ell-r+j)}{(r-2j)(\ell-r)}}\|u\|_{2q,Q_T}^{\frac{q(\mq-j)}{(r-2j)(q-\lz)}}.\label{hh1}
	\end{eqnarray}
	We first eliminate  the $L^{2q}$- norm. For this purpose,
	we need to require
	\begin{equation}\label{r2q}
		r>2q.
	\end{equation}
	This together with \eqref{ldef} enables us to form the interpolation inequality
	\begin{eqnarray}
		\|u\|_{r,Q_T}\leq \|u\|_{\ell,Q_T}^{\frac{\ell(r-2q)}{r(\ell-2q)}}\|u\|_{2q,Q_T}^{\frac{2q(\ell-r)}{r(\ell-2q)}}.\label{int2}
	\end{eqnarray}
	Pick
	\begin{equation}\label{jmq}
		j>\mq.
	\end{equation}
Subsequently, we can raise both sides of \eqref{int2} to the power of $\frac{(j-\mq)r(\ell-2q)}{2(\ell-r)(r-2j)(q-\lz)}$ to get
	\begin{equation}
		\|u\|_{r,Q_T}^{\frac{(j-\mq)r(\ell-2q)}{2(\ell-r)(r-2j)(q-\lz)}}\leq \|u\|_{\ell,Q_T}^{\frac{(j-\mq)\ell(r-2q)}{2(\ell-r)(r-2j)(q-\lz)}}\|u\|_{2q,Q_T}^{\frac{q(j-\mq)}{(r-2j)(q-\lz)}}\nonumber
	\end{equation}
	Incorporate this into \eqref{hh1} and keep \eqref{fd}, \eqref{c1}, and \eqref{s1} in mind to deduce
	\begin{eqnarray}
		\lefteqn{\|u\|_{\infty,Q_T}}\nonumber\\
		&\leq&8\sqrt{N} c_1\|u\|_{2\lz,Q_T}^{s_1}\|u\|_{\ell,Q_T}^{-\frac{j\ell}{(r-2j)(\ell-r)}+\frac{(j-\mq)\ell(r-2q)}{2(\ell-r)(r-2j)(q-\lz)}}\|u\|_{r,Q_T}^{\frac{r(\ell-r+j)}{(r-2j)(\ell-r)}-\frac{(j-\mq)r(\ell-2q)}{2(\ell-r)(r-2j)(q-\lz)}}\nonumber\\
		&=&8\sqrt{N}\ c_1\|u\|_{2\lz,Q_T}^{s_1}e^{\frac{f(\ell)}{(r-2j)(\ell-r)}\left[-j+\frac{(j-\mq)(r-2q)}{2(q-\lz)}\right]+\frac{f(r)}{(r-2j)(\ell-r)}\left[\ell-r+j-\frac{(j-\mq)(\ell-2q)}{2(q-\lz)}\right]}\nonumber\\
			&=&8\sqrt{N}\ \left[c(N,\nu, q, j)\right]^{\frac{1}{r-2j}}\|u\|_{2\lz,Q_T}^{\frac{c(N, q, j)}{r-2j}}e^{\frac{f(\ell)-f(r)}{(r-2j)(\ell-r)}\left[-j+\frac{(j-\mq)(r-2q)}{2(q-\lz)}\right]+\frac{f(r)}{r-2j}\left[1-\frac{j-\mq}{2(q-\lz)}\right]}\nonumber\\
		&\ra&8\sqrt{N}\ \left[c(N,\nu, q, j)\right]^{\frac{1}{r-2j}}\|u\|_{2\lz,Q_T}^{\frac{c(N, q, j)}{r-2j}}e^{\frac{f^\prime(r)}{(r-2j)}\left[-j+\frac{(j-\mq)(r-2q)}{2(q-\lz)}\right]+\frac{f(r)}{r-2j}\left[1-\frac{j-\mq}{2(q-\lz)}\right]}\ \ \ \mbox{as $\ell\ra r^+$}.
		\label{ha1}
	\end{eqnarray}
	To continue, we need to further refine our choice of parameters. We are tempted to assume 
	\begin{equation}
		-j+\frac{(j-\mq)(r-2q)}{2(q-\lz)}\leq0.\nonumber
	\end{equation}
	This combined with \eqref{rcon} implies
	\begin{equation}
			2j<2q+\frac{2(q-\lz)j}{j-\mq}.\nonumber
	\end{equation}
	Unfortunately, this  restriction on how large $j$ can be  is inconsistent with \eqref{jl1}, and it does lead to failure. Therefore, we must
	select $j$ so  that
	\begin{equation}\label{ji}
		2j>2q+\frac{2(q-\lz)j}{j-\mq}.
	\end{equation}
	Under this selection, 
	 \eqref{rcon} implies both \eqref{r2q} and
	\begin{equation}\label{r0p}
		-j+\frac{(j-\mq)(r-2q)}{2(q-\lz)}>0.
	\end{equation}
Thus, we are essentially facing	 the third case  mentioned in the introduction. 

	We may write \eqref{ji} as
	\begin{equation}
		(j-q)^2-(\mq-\lz)(j-q)-q(q-\lz)>0.\nonumber
	\end{equation}
	The left-hand side is a quadratic function in $j-q$. We can solve the preceding inequality via the quadratic formula. Upon doing so, we  deduce
	\begin{eqnarray}
		j&>&q+\frac{\mq-\lz+\sqrt{(\mq-\lz)^2+4q(q-\lz)}}{2}\nonumber\\
		&=&q+\frac{\mq-\lz+\sqrt{(2q-\lz)^2+\mq(\mq-2\lz)}}{2}.\label{jl6}
	\end{eqnarray}
	This condition is  stronger  than \eqref{jmq}. To see this, we calculate from \eqref{mn} and \eqref{qlb} that
	\begin{eqnarray}
		\lefteqn{q+\frac{\mq-\lz+\sqrt{(2q-\lz)^2+\mq(\mq-2\lz)}}{2}}\nonumber\\
		&>&2q+\frac{\mq-2\lz}{2}>\mq.\nonumber
	\end{eqnarray}
	
	For later purpose, we need to eliminate $8\sqrt{N}$ from the right-hand side of \eqref{ha1}. To do this, we fix
	\begin{equation}
		\eta>0.\nonumber
	\end{equation}
	We may claim
	\begin{equation}\label{ri1}
		8\sqrt{N}\|u\|_{r,Q_T}^{-\frac{\eta
				r}{2(q-\lz)(r-2j)}}\leq 1 \ \ \mbox{for each $r\in (2j, \infty)$ and each $q\in [q_0,2q_0]$ . }
	\end{equation}
	If this is not true, then
	\begin{equation}
		\|u\|_{r,Q_T}\leq \left(	8\sqrt{N}\right)^{\frac{2(q-\lz)(r-2j)}{\eta r}}\ \ \mbox{for some $r\in (2j, \infty)$ and some $q\in [q_0,2q_0]$ . }\nonumber
	\end{equation}
	This combined with \eqref{wif} implies
	\begin{eqnarray}
		\|u\|_{\infty,Q_T}&\leq& \left(8\sqrt{N}\right)^{1+\frac{2(q-\lz)(r-2j)}{\eta(r-\mq)}}\left[c(N,\nu,q)\right]^{\frac{1}{r-\mq}}\|u\|_{2\lz,Q_T}^{\frac{c(N,q)}{r-\mq}}\nonumber\\
		&\leq&\left(8\sqrt{N}\right)^{1+\frac{2(q-\lz)}{\eta}}\left[\max\left\{1,c(N,\nu,q)\right\}\right]^{\frac{1}{2q-\mq}}\left(\max\left\{1,\|\vo\|_{2,\rn}\right\}\right)^{\frac{c(N,q)}{2q-\mq}}.\nonumber
	\end{eqnarray}
	The right -hand side is finite due to \eqref{qlb}, and \eqref{ns24} follows.
	
	Under \eqref{ri1}, \eqref{ha1} becomes
	\begin{equation}\label{ha2}
	\|u\|_{\infty,Q_T}\leq \left[c(N,\nu, q, j)\right]^{\frac{1}{r-2j}}\|u\|_{2\lz,Q_T}^{\frac{c(N, q, j)}{r-2j}}e^{\frac{f^\prime(r)}{(r-2j)}\left[-j+\frac{(j-\mq)(r-2q)}{2(q-\lz)}\right]+\frac{f(r)}{r-2j}\left[1-\frac{j-\mq-\eta}{2(q-\lz)}\right]}.
	\end{equation}
	 For each
	\begin{equation}
	\ve\in \left(0, 1\right),\nonumber
	\end{equation}
we consider the function
	\begin{equation}
		h(s)=\frac{f(s)}{s^{1-\ve}(s-2j)^\ve} \ \ \mbox{on $(2j, \infty)$.}\nonumber
	\end{equation}
Recall \eqref{fp1} to obtain
	\begin{equation}\label{2jl}
		\lim_{s\ra (2j)^+}h(s)=+\infty.
	\end{equation}
	On the other hand,  \eqref{ffl} asserts 
	\begin{equation}\label{3jl}
			\lim_{s\ra\infty}h(s)=\lim_{s\ra\infty}\frac{f(s)}{s}\left(\frac{s}{s-2j}\right)^\ve=\ln\left(\|u\|_{\infty, Q_T}\right).
	\end{equation}
	We may assume that 
	\begin{equation}
		\|u\|_{\infty, Q_T}>1.\nonumber
	\end{equation}
	Were this not true, we would have nothing more to prove.
	The Intermediate Value Theorem asserts that the equation
	\begin{equation}\label{eq}
		h(s)=(1+\ve)\ln\left(\|u\|_{\infty, Q_T}\right) 
	\end{equation} has at leas one solution in the interval $(2j, \infty)$. Obviously, the largest solution exists, and we denote it by $\re$. 
	We claim 
	\begin{equation}\label{rel}
		\re\ra\infty\ \ \mbox{as $\ve\ra 0^+$}\ \ \mbox{	and}\ \ 	h^\prime(\re)=\frac{f^\prime(\re)-f(\re)\left(\frac{1-\ve}{\re}+\frac{\ve}{\re-2j}\right)}{\re^{1-\ve}(\re-2j)^\ve}\leq 0.
	\end{equation}
To prove the inequality here, we argue by contradiction. 	Suppose that it is not  true, i.e.,
	\begin{equation}
		h^\prime(\re)> 0.\nonumber
	\end{equation}
It immediately follows that $h(s)$ is increasing in a neighborhood of $\re$. As a result, we have
	\begin{equation}
		h(s)>(1+\ve)\ln\left(\|u\|_{\infty, Q_T}\right)\ \ \mbox{for some $s\in (\re, \infty)$.}\nonumber
	\end{equation}
	This together with \eqref{3jl} puts us in a position to apply the Intermediate Value Theorem again. Upon doing so, we conclude  that 
	there is a solution to \eqref{eq} in the interval $ (\re, \infty)$, which is a contradiction because $\re$ is the largest solution. This completes the proof.

To see the limit in \eqref{rel},  we set
	\begin{equation}
		r_0=\liminf_{\ve\ra 0^+}\re.\nonumber
	\end{equation}
	Remember
	\begin{equation}
		h(\re)=(1+\ve)\ln\left(\|u\|_{\infty, Q_T}\right).\nonumber
	\end{equation}
This combined with \eqref{2jl} implies that 
\begin{equation}
	r_0>2j.\nonumber
\end{equation}	
 Note that 
the inequality in \eqref{rel}  is equivalent to
\begin{equation}\label{seq}
	f^\prime(\re)\leq f(\re)\left(\frac{1-\ve}{\re}+\frac{\ve}{\re-2j}\right).
\end{equation}
If $r_0$ is finite, we can take $\ve\ra 0^+$ in the preceding inequality to derive
\begin{equation}
	f^\prime(r_0)\leq \frac{f(r_0)}{r_0}.\nonumber
\end{equation}
But this contradicts \eqref{fri1}. Thus, we must have
\begin{equation}
	r_0=\infty.\nonumber
\end{equation}
 Obviously, \eqref{ha2} holds for $r=\re$. That is, there holds
	\begin{equation}
		\|u\|_{\infty,Q_T}\leq 	\left[c(N,\nu, q, j)\right]^{\frac{1}{\re-2j}}\|u\|_{2\lz,Q_T}^{\frac{c(N, q, j)}{\re-2j}}e^{\frac{f^\prime(\re)}{(\re-2j)}\left[-j+\frac{(j-\mq)(\re-2q)}{2(q-\lz)}\right]+\frac{f(\re)}{\re-2j}\left[1-\frac{j-\mq-\eta}{2(q-\lz)}\right]}.\nonumber
	\end{equation}
	In view of \eqref{r0p}, we can incorporate \eqref{seq} into this to derive
	\begin{eqnarray}
		\|u\|_{\infty,Q_T}&\leq	&\left[c(N,\nu, q, j)\right]^{\frac{1}{\re-2j}}\|u\|_{2\lz,Q_T}^{\frac{c(N, q, j)}{\re-2j}}e^{\frac{f(\re)\left(\frac{1-\ve}{\re}+\frac{\ve}{\re-2j}\right)}{(\re-2j)}\left[-j+\frac{(j-\mq)(\re-2q)}{2(q-\lz)}\right]+\frac{f(\re)}{\re-2j}\left[1-\frac{j-\mq-\eta}{2(q-\lz)}\right]}\nonumber\\
		&=&\left[c(N,\nu, q, j)\right]^{\frac{1}{\re-2j}}\|u\|_{2\lz,Q_T}^{\frac{c(N, q, j)}{\re-2j}}\|u\|_{\re,Q_T}^{\beta_\ve},\label{ha6}
	\end{eqnarray}
	where
	\begin{eqnarray}
\beta_\ve&=&\frac{\re}{\re-2j}\left[\left[\frac{1}{\re}+\frac{2j\ve}{\re(\re-2j)}\right]\left[-j+\frac{(j-\mq)(\re-2q)}{2(q-\lz)}\right]+1-\frac{j-\mq-\eta}{2(q-\lz)}\right]\nonumber\\
		&=&\frac{2j\ve}{(\re-2j)^2}\left[-j+\frac{(j-\mq)(\re-2q)}{2(q-\lz)}\right]+\frac{1}{\re-2j}\left[\re-j-\frac{2q(j-\mq)-\eta}{2(q-\lz)}\right].\nonumber
	\end{eqnarray}
	It is easy to see that
	\begin{equation}
		\lim_{\ve\ra 0^+}\beta_\ve=1.\nonumber
	\end{equation}
	According to \eqref{r0p}, the first term in the expression for $\beta_\ve$ is positive. Therefore,
	\begin{equation}
		\beta_\ve>0\ \ \mbox{whenever $\re>j+\frac{2q(j-\mq)-\eta}{2(q-\lz)}$}.\nonumber
	\end{equation}
	In view of the limit in \eqref{rel}, we may assume that the last inequality holds for all $\ve\in(0,1)$.
We are in a position to apply \eqref{int} in \eqref{ha6}. Upon doing so, we arrive at 
	\begin{equation}\label{ha7}
		\|u\|_{\infty,Q_T}^{\re\left[1-\frac{(\re-2\lz)\beta_\ve}{\re}\right]}\leq\left[c(N,\nu, q, j)\right]^{\frac{\re}{\re-2j}}\|u\|_{2\lz,Q_T}^{\frac{c(N, q, j)\re}{\re-2j}+2\lz\beta_\ve}.
	\end{equation}
	Note that
	\begin{eqnarray}
		\re\left[1-\frac{(\re-2\lz)\beta_\ve}{\re}\right]&=&	\re-(\re-2\lz)\beta_\ve\nonumber\\
		&=&	\re-\frac{2j\ve(\re-2\lz)}{(\re-2j)^2}\left[-j+\frac{(j-\mq)(\re-2q)}{2(q-\lz)}\right]\nonumber\\
		&&-\frac{(\re-2\lz)}{\re-2j}\left[\re-j-\frac{2q(j-\mq)-\eta}{2(q-\lz)}\right]\nonumber\\
		&=&j+\frac{2q(j-\mq)-\eta}{2(q-\lz)}-\frac{2j\ve(\re-2\lz)}{(\re-2j)^2}\left[-j+\frac{(j-\mq)(\re-2q)}{2(q-\lz)}\right]\nonumber\\
		&&-\frac{2(j-\lz)}{\re-2j}\left[\re-j-\frac{2q(j-\mq)-\eta}{2(q-\lz)}\right]\nonumber\\
		&\ra&j+\frac{2q(j-\mq)-\eta}{2(q-\lz)}-2(j-\lz)\ \ \mbox{ as $\ve\ra0^+$.}\nonumber
	\end{eqnarray}
	Pass to the limit in \eqref{ha7} to obtain
		\begin{equation}
		\|u\|_{\infty,Q_T}^{\frac{\lz j}{q-\lz}+2\lz-\frac{2q\mq+\eta}{2(q-\lz)}}\leq c(N,\nu, q, j)\|u\|_{2\lz,Q_T}^{c(N, q, j)+2\lz}.\nonumber
	\end{equation}
	We further require the exponent on the left-hand side to be positive, i.e., 
	\begin{equation}\label{jl7}
		j>\frac{(\mq-2\lz)q}{\lz}+2\lz+\frac{\eta}{2\lz}.
	\end{equation}
Consequently,
\begin{equation}
		\|u\|_{\infty,Q_T}\leq c(N,\nu, q, j,\eta)\|u\|_{2\lz,Q_T}^{c(N, q, j,\eta)}\leq c(N,\nu, q, j,\eta)\|\vo\|_{2,\rn}^{c(N, q, j,\eta)}.\nonumber
\end{equation}
	Obviously, this implies \eqref{ns24}.

In summary, the order in which we pick our parameters is as follows: 
We first choose $q$ as in \eqref{qcon1}, which implies \eqref{qcon}. Take any $\eta>0$. Then select $j$ so that  \eqref{jl6} and \eqref{jl7} are both satisfied.

		Remember that all the constants in our estimates do not depend $T$. This enables us to extend the local-in-time solution as a global one. The proof of Theorem \ref{thm} is complete.
	\end{proof}
	
	Naturally, one would ask if our choice of parameters is the best possible. Is there a better to select parameters? 
	We will investigate this possibility in a future study.
\bigskip


\end{document}